\newtheorem{theorem}{Theorem}[section]
\newtheorem{lemma}[theorem]{Lemma}
\newtheorem{corollary}[theorem]{Corollary} 
\theoremstyle{definition}  
\newtheorem{definition}[theorem]{Definition}
\newtheorem{example}[theorem]{Example}
\newtheorem{remark}[theorem]{Remark}
\newcommand{\Tr}{\text{Tr}}
\newcommand{\id}{\text{id}}
\newcommand{\FPdim}{\text{FPdim}} 
\newcommand{\Irr}{\text{Irr}}
\newcommand{\Hom}{\text{Hom}}
\newcommand{\C}{\mathcal{C}}
\newcommand{\Z}{\mathcal{Z}}
\renewcommand{\O}{\mathcal{O}}
\newcommand{\be}{\mathbf{1}}
\newcommand{\imply}{\Rightarrow}
\renewcommand{\be}{\mathbf{1}}
\newcommand{\BZ}{{\mathbb Z}}
\newcommand{\BQ}{{\mathbb Q}}
\newcommand{\BR}{{\mathbb R}}
\newcommand{\fA}{{\mathbb A}}
\newcommand{\CC}{{\mathbb{C}}}
\newcommand{\otz}{\otimes_{\BZ}}
\begin{document}

\title{On formal codegrees of fusion categories}

\author{Victor Ostrik}
\address{V.O.: Department of Mathematics,
University of Oregon, Eugene, OR 97403, USA}
\email{vostrik@math.uoregon.edu}

\begin{abstract}
We prove a general result which implies that the global and Frobenius-Perron dimensions
of a fusion category generate Galois invariant ideals in the ring of algebraic integers.
\end{abstract}

\date{\today}
\maketitle  

\section{Introduction}
The goal of this note is to give some new restrictions on the Grothendieck rings of (multi-)fusion 
categories. Let $k$ be an algebraically closed field of characteristic zero and let
$\fA \subset k$ be the subring of algebraic integers.
Recall (see \cite{ENO}) that a multi-fusion category $\C$ over $k$ is a $k-$linear semisimple
rigid tensor category with finite dimensional $\Hom$ spaces and finitely many simple objects;
such category is said to be fusion category if its 
unit object $\be$ is simple. 
The Grothendieck ring $K(\C)$ of a multi-fusion category $\C$
is an example of a {\em based ring} in a sense of \cite{Lu1}, see \S \ref{basedS} below. 
In particular for any irreducible
representation $E$ of $K(\C)\otz k$ its {\em formal codegree} $f_E\in k$ is defined, see \S \ref{basedS}.
It is easy to see from the definition that $f_E\in \fA \subset k$. The numbers $f_E$ as $E$ runs through
the irreducible representations of $K(\C)\otz k$ are called formal codegrees of multi-fusion 
category $\C$.

\begin{definition} An algebraic integer $\alpha$ is called a {\em $d-$number} if the ideal it generates
in the ring of algebraic integers is invariant under the action of the absolute Galois group 
$Gal(\bar \BQ/\BQ)$.
\end{definition}

Here is the main result of this note.

\begin{theorem} \label{main}
The formal codegrees of a multi-fusion category are $d-$numbers.
\end{theorem}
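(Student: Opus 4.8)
The plan is to rephrase the statement in terms of the based ring $A:=K(\C)$ and then isolate a purely arithmetic assertion. Write $\{b_i\}$ for the standard basis of $A$, and set $c:=\sum_i b_ib_i^*\in A$. One checks that (i) $c$ is central in $A\otz k$; (ii) in the block decomposition $A\otz k=\bigoplus_E\End(V_E)$ indexed by the irreducible representations, $c$ acts on $V_E$ by the scalar $f_E$, equivalently the functional $\tau$ picking out the coefficient of $\be$ decomposes as $\tau=\sum_E f_E^{-1}\dim(V_E)\,\Tr_{V_E}$; and (iii) since the structure constants of $A$ are non-negative integers and the matrix of left multiplication by $b_i^*$ is the transpose of that of $b_i$, the matrix $L_c$ of left multiplication by $c$ on $A\otz\BQ$ equals $\sum_i L_{b_i}L_{b_i}^T$, so $L_c$ is a symmetric positive-definite integer matrix with characteristic polynomial $P(t)=\prod_E(t-f_E)^{\dim(V_E)^2}\in\BZ[t]$. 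This re-proves $f_E\in\fA$, and since $P$ is fixed by $Gal(\bar\BQ/\BQ)$ it shows that $\sigma(f_E)$ is again a formal codegree for every $\sigma$, concretely $\sigma(f_E)=f_{E^\sigma}$ where $E^\sigma$ is the Galois twist of the representation $E$.

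First I would reduce the $d$-number property to a divisibility. By definition $f_E$ is a $d$-number iff the ideal $(f_E)\subset\fA$ is $Gal(\bar\BQ/\BQ)$-stable; applying this to $\sigma$ and to $\sigma^{-1}$ shows it is equivalent to $\sigma(f_E)/f_E\in\fA$ for all $\sigma$, i.e.\ to $f_E\mid f_{E^\sigma}$ in $\fA$ for all $\sigma$. Because $Gal(\bar\BQ/\BQ)$ permutes transitively the primes of $\fA$ above a fixed rational prime $p$, this is in turn equivalent to: for every $p$, the valuation $v_\mathfrak p(f_E)$ is independent of the choice of prime $\mathfrak p\mid p$; equivalently, all Galois conjugates of $f_E$ have the same $p$-adic valuation; equivalently, the Newton polygon at $p$ of the minimal polynomial of $f_E$ (an irreducible factor of $P$) is a single line segment. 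So the theorem is reduced to the assertion that every irreducible factor over $\BQ$ of $P(t)=\det(tI-L_c)$ has pure Newton polygon at every prime.

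This last assertion is the heart of the matter, and it is where I expect the main difficulty to lie, since it is false for general symmetric positive-definite integer matrices --- e.g.\ $\bigl(\begin{smallmatrix}2&1\\1&4\end{smallmatrix}\bigr)$ has characteristic polynomial $t^2-6t+7$, irreducible over $\BQ$, whose Newton polygon at $7$ has the two distinct slopes $0$ and $-1$. Thus one must use in an essential way that $L_c=\sum_i L_{b_i}L_{b_i}^T$ comes from a based ring: the non-negative integrality of the structure constants, the $*$-operation, and the relations among the $L_{b_i}$. The approach I would take is $p$-local. Fix $p$ and a prime $\mathfrak p\mid p$ of $\fA$, pick an $A\otz\fA_\mathfrak p$-stable lattice in each $V_E$, and study the finite-dimensional algebra $A\otz\overline{\mathbb F}_p$ together with the $\fA_\mathfrak p$-orders over it. Since $c$ acts on the lattice by the scalar $f_E$, it acts by $f_E$ modulo $\mathfrak p^m$ on the reduction modulo $\mathfrak p^m$, so $v_\mathfrak p(f_E)$ is governed by the block and linkage structure of the reductions of $A$ modulo powers of $\mathfrak p$; the crux is to show that this ``valuation pattern'' on the set of irreducibles is carried by each $\sigma$ to the corresponding pattern at $\sigma\mathfrak p$ compatibly with $E\mapsto E^\sigma$. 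Concretely I would try to produce, for each $\sigma$, an integral central element $G_\sigma\in A\otz\fA$ acting on $V_E$ by $f_{E^\sigma}/f_E$, equivalently to show that $c$ divides $\sum_E f_{E^\sigma}e_E$ inside $A\otz\fA$ (where the $e_E$ are the block idempotents over $\bar\BQ$); building such a $G_\sigma$, or the integral block idempotents out of which it would be assembled, is the step where I anticipate the real work --- and possibly where structure of $\C$ beyond the abstract based-ring axioms has to be invoked.
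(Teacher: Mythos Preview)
Your write-up is honest about where the gap lies, but that gap is fatal and your proposed line of attack will not close it.

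First a small correction: the central element $c=\sum_i b_ib_i^*$ does not act on $V_E$ by $f_E$; it acts by $f_E\dim(V_E)$ (this is Lemma~\ref{alpha} in the paper). Consequently the characteristic polynomial of $L_c$ is $\prod_E\bigl(t-f_E\dim(V_E)\bigr)^{\dim(V_E)^2}$, not $\prod_E(t-f_E)^{\dim(V_E)^2}$. This is harmless for the $d$-number question since $\dim(V_E)\in\BZ$, but it matters if you want the reductions modulo $\mathfrak p$ to be correct.

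The real problem is that your entire strategy --- extract the integer matrix $L_c$, study its Newton polygons, and hope that the based-ring axioms force purity --- cannot succeed, because the theorem is \emph{false} for general based rings. The paper's Example~\ref{rank3} exhibits a perfectly good based ring $K(2,1,0,2)$ (non-negative integer structure constants, $*$-involution, the works) whose formal codegrees are roots of the irreducible cubic $t^3-26t^2+148t-148$ and are \emph{not} $d$-numbers. So no argument that uses only ``the non-negative integrality of the structure constants, the $*$-operation, and the relations among the $L_{b_i}$'' can possibly prove the theorem. Your own observation that a random positive-definite integer matrix need not have pure Newton polygons is exactly right; what you are missing is that the based-ring structure alone does not rule this out either. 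The integral central elements $G_\sigma$ you hope to build simply do not exist for $K(2,1,0,2)$.

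The paper's proof uses the category $\C$ in an essential and rather different way. The key input is the Drinfeld center $\Z(\C)$: via the forgetful/induction adjunction one shows (Lemma~\ref{key}) that each irreducible representation $E$ of $K(\C)$ induces a one-dimensional representation $\hat E$ of $K(\Z(\C))$ with $f_{\hat E}=f_E^2$. This lets one replace $\C$ first by a fusion category, then (after sphericalization) by a spherical one, and finally by a \emph{modular} tensor category. For modular $\C$ the formal codegrees are $1/s_{0i}^2$ in terms of the $S$-matrix, and the Coste--Gannon/de~Boer--Goeree Galois symmetry $g(s_{ij})=\pm s_{g(i)j}$ together with the integrality of $s_{ij}/s_{0i}$ gives the $d$-number property directly. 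None of this is visible at the level of the Grothendieck ring alone; the Drinfeld center is the piece of categorical structure that your proposal is missing.
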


We remark that for a general based ring the formal codegrees are not necessarily $d-$numbers,
see Example \ref{rank3}.

We refer the reader to \cite[\S 8.2]{ENO} for the definition of the Frobenius-Perron dimension
$\FPdim(\C)\in \BR$ of a fusion category $\C$. It is known (see {\em loc. cit.}) that $\FPdim(\C)$ 
is an algebraic integer. Let $\phi: \fA \subset \CC$ be an arbitrary ring embedding.
The definition implies immediately that $\phi^{-1}(\FPdim(\C))$
is one of the formal codegrees of $\C$. Thus we have: 

\begin{corollary}\label{FP} For a fusion category $\C$ its Frobenius-Perron dimension 
{\em $\FPdim(\C)$} is a $d-$number. \qed
\end{corollary}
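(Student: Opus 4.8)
The plan is to obtain the corollary as a direct consequence of Theorem \ref{main}, which has just shown that \emph{every} formal codegree of a multi-fusion category is a $d$-number; what remains is only to realise $\FPdim(\C)$ as one of these codegrees and to confirm that the $d$-number property survives the passage through an embedding into $\CC$.

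First I would fix an arbitrary ring embedding $\phi\colon \fA \hookrightarrow \CC$. Since $\FPdim(\C)$ is a (positive real) algebraic integer by \cite[\S 8.2]{ENO}, the element $\phi^{-1}(\FPdim(\C))$ is well defined in $\fA$. The essential point---already recorded in the paragraph preceding the statement---is that this element is the formal codegree $f_{E_0}$ attached to the distinguished one-dimensional representation $E_0$ of $K(\C)\otz k$ on which each basis object $b$ acts by its Frobenius-Perron dimension $\FPdim(b)$. Unwinding the definition of the formal codegree on $E_0$ recovers the familiar expression $\sum_b \FPdim(b)^2 = \FPdim(\C)$, so this identification asks only that one recall the relevant definition rather than perform any fresh computation.

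With $\phi^{-1}(\FPdim(\C)) = f_{E_0}$ in hand, Theorem \ref{main} applies verbatim and yields that $f_{E_0}$, and hence $\phi^{-1}(\FPdim(\C))$, is a $d$-number. To conclude I would transport this back to $\FPdim(\C)$ itself by noting that the $d$-number property is stable under the Galois action: if $(\alpha)$ is invariant under $Gal(\bar\BQ/\BQ)$, then so is $(\tau\alpha)$ for every $\tau$ in the group, because $\sigma(\tau\alpha) = \tau\bigl((\tau^{-1}\sigma\tau)\alpha\bigr)$ and $(\tau^{-1}\sigma\tau)\alpha$ generates the same ideal as $\alpha$. It follows that whether an algebraic integer is a $d$-number is intrinsic to the abstract ring of algebraic integers and independent of the chosen copy inside $\CC$, so $\FPdim(\C) = \phi(f_{E_0})$ inherits the property.

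Because each ingredient is either cited or already proved, the argument is essentially immediate---consistent with the statement being marked \qed. The only place demanding any care is the bookkeeping around $\phi$, namely the verification that ``$d$-number'' is an embedding-independent notion; I expect this short Galois-theoretic check to be the sole, and quite minor, obstacle.
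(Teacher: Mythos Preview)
Your proposal is correct and follows exactly the route the paper takes: identify $\phi^{-1}(\FPdim(\C))$ with the formal codegree of the one-dimensional Frobenius--Perron representation (via Example~\ref{one}, giving $\sum_b \FPdim(b)^2=\FPdim(\C)$), then invoke Theorem~\ref{main}. The only difference is that the paper treats the embedding-independence of the $d$-number property as self-evident and omits your Galois bookkeeping; your short check that $(\tau\alpha)$ is Galois-invariant whenever $(\alpha)$ is is harmless extra care (indeed $(\tau\alpha)=(\alpha)$ directly, by invariance).
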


We refer the reader to \cite[Definition 2.2]{ENO} for the definition of the global dimension
$\dim(\C)$ of a fusion category $\C$. Let $\tilde \C$ be the sphericalization of $\C$, see
\cite[Proposition 5.14]{ENO}. It follows immediately from definitions that $2\dim(\C)$ is
one of the formal codegrees of $\tilde \C$. Thus applying Theorem \ref{main} to
the fusion category $\tilde \C$ we have: 

\begin{corollary} For a fusion category $\C$ its global dimension $\dim(\C)$ is a $d-$number. \qed
\end{corollary}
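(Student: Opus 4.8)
The plan is to deduce the statement from Theorem~\ref{main} applied to a spherical model of $\C$, expanding the remark that precedes the corollary into its two nontrivial ingredients: the realization of $2\dim(\C)$ as an honest formal codegree, and the passage from $2\dim(\C)$ back to $\dim(\C)$. First I would invoke \cite[Proposition~5.14]{ENO} to obtain the sphericalization $\tilde\C$, a spherical fusion category carrying a forgetful tensor functor $\tilde\C\to\C$ under which $\Irr(\tilde\C)$ is a two-to-one cover of $\Irr(\C)$, each $X\in\Irr(\C)$ lifting to two simple objects of $\tilde\C$, each with squared norm equal to that of $X$. By \cite[Definition~2.2]{ENO}, $\dim(\tilde\C)$ is the sum of the squared norms of the objects of $\Irr(\tilde\C)$, so $\dim(\tilde\C)=2\dim(\C)$.

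Next I would exhibit the representation responsible for this codegree. Because $\tilde\C$ is spherical, its categorical dimension is multiplicative and satisfies $\dim_{\tilde\C}(X^{*})=\dim_{\tilde\C}(X)$; hence $\tilde\chi\colon K(\tilde\C)\to k$, $[X]\mapsto\dim_{\tilde\C}(X)$, is a ring homomorphism, i.e.\ a one-dimensional representation $E_{0}$ of $K(\tilde\C)\otz k$. Unwinding the definition of the formal codegree (\S\ref{basedS}) for the basis $\Irr(\tilde\C)$ of $K(\tilde\C)$ yields $f_{E_{0}}=\sum_{X\in\Irr(\tilde\C)}\dim_{\tilde\C}(X)\dim_{\tilde\C}(X^{*})=\sum_{X\in\Irr(\tilde\C)}\dim_{\tilde\C}(X)^{2}$, and for a spherical fusion category this is exactly $\dim(\tilde\C)=2\dim(\C)$. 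Thus $2\dim(\C)$ is one of the formal codegrees of $\tilde\C$.

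Finally, Theorem~\ref{main} applied to the fusion category $\tilde\C$ shows that $2\dim(\C)=f_{E_{0}}$ is a $d$-number, i.e.\ the ideal $(2\dim(\C))\subset\fA$ is stable under $Gal(\bar\BQ/\BQ)$. From this $\dim(\C)$ is itself a $d$-number: for a nonzero rational integer $n$, an algebraic integer $\alpha$ generates a Galois-stable ideal of $\fA$ if and only if $n\alpha$ does, since $(n\alpha)=(n)(\alpha)$, the ideal $(n)=n\fA$ is Galois-fixed and invertible in $\fA$ (with inverse fractional ideal $\frac{1}{n}\fA$), and $(n)$ can therefore be cancelled from $\sigma\big((n)(\alpha)\big)=(n)(\sigma\alpha)$ for every $\sigma\in Gal(\bar\BQ/\BQ)$; one now takes $n=2$ and $\alpha=\dim(\C)$. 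Granting Theorem~\ref{main}, there is no essential difficulty; the point most in need of care is the identification in the second paragraph --- that the global dimension of a spherical fusion category genuinely occurs as the formal codegree of its dimension character --- which is exactly why one replaces the possibly non-pivotal $\C$ by $\tilde\C$ at the outset.
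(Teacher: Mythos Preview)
Your proof is correct and follows exactly the approach sketched in the paper: pass to the sphericalization $\tilde\C$, identify $2\dim(\C)$ as the formal codegree of the dimension character of $\tilde\C$, apply Theorem~\ref{main}, and descend from $2\dim(\C)$ to $\dim(\C)$. You have simply expanded the steps the paper leaves implicit; for the final descent you could alternatively cite Corollary~\ref{twice} directly.
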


\begin{remark} Let $\C$ be a fusion category. 
Choose a ring embedding $\phi: \fA \subset \CC$ and let $I_{FP}(\C)\subset \fA$
be a principal ideal generated by $\phi^{-1}(\FPdim(\C))$ (note that by Corollary \ref{FP}
the ideal $I_{FP}(\C)$ does not depend on a choice of $\phi$). Similarly, let $I(\C)\subset \fA$
be a principal ideal generated by $\dim(\C)$. Then \cite[Proposition 8.22]{ENO} says that
$I(\C)\subset I_{FP}(\C)$. We don't know any example when $I_{FP}(\C)\ne I(\C)$. 
\end{remark}

For a based ring $K$ a {\em categorification} is a (multi-)fusion 
category $\C$ and an isomorphism of based rings $K\simeq K(\C)$.
Theorem \ref{main} implies that many based rings have no categorifications. 

\begin{example}\label{rank3} Let $k,l,m,n$ be nonnegative
integers satisfying $k^2+l^2=lm+kn+1$ and let $K(k,l,m,n)$ be the based ring with the basis $1,X,Y$
and the multiplication given by (see \cite[\S 3.1]{O2})
$$X^2=1+mX+kY,\; Y^2=1+lX+nY,\; XY=YX=kX+lY.$$
It is easy to compute that any formal codegree of $K(2,1,0,2)$ is a root of (irreducible) polynomial 
$t^3-26t^2+148t-148$; using Lemma \ref{equiv} (v) and Theorem \ref{main} we see that $K(2,1,0,2)$
has no categorifications.
\end{example}

Another condition for a based ring to admit a categorification is
\cite[Theorem 8.51]{ENO}. It states that for a fusion category $\C$, an object $X\in \C$
and an irreducible representation $E$ of $K(\C)\otz k$ the number $\Tr([X],E)$ is a
cyclotomic integer. In particular this implies the same conclusion as in Example \ref{rank3}:
the based ring $K(2,1,0,2)$ has no categorifications. 
On the other hand, Theorem \ref{main} implies that rings
$K(3,2,3,2)$ and $K(8,1,8,7)$ have no categorifications while \cite[Theorem 8.51]{ENO}
gives no conclusion for these rings. On the other hand \cite[Theorem 8.51]{ENO} implies
that ring $K(911,463,1799,232)$ has no categorification while our Theorem \ref{main}
is inconclusive for this ring (this is a unique example with $l\le k<1000$;
I am very grateful to Josiah Thornton for finding this example). 
Finally, both \cite[Theorem 8.51]{ENO}
and Theorem \ref{main} are inconclusive for the based rings of rank 2 while it is known
that many of them have no categorifications, see \cite{O1}.

Recall (see e.g. \cite{Wa}) that a {\em cyclotomic unit} is a product of a root of 1 and units of the form 
$\frac{\zeta^n-1}{\zeta -1}$ where $\zeta$ is a root of 1. Notice that \cite[Theorem 8.51]{ENO}
implies that a formal codegree $f_E$ of a multi-fusion category $\C$ is a cyclotomic 
integer\footnote{Actually in \cite[Theorem 8.51]{ENO} it is assumed that the category $\C$ is fusion
category. But the proof in {\em loc. cit.} extends easily to the case of multi-fusion categories.}.
Thus Theorem \ref{main} and Corollary \ref{cyclo} imply the following result:

\begin{corollary}  Let $f_E$ be a formal codegree of a multi-fusion category $\C$. There exists
a positive integer $m$ such that $f_E^m$ is a rational integer times a cyclotomic unit.
\end{corollary}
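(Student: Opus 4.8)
The plan is to combine the two structural results about a formal codegree $f_E$: on the one hand $f_E$ is a $d$-number (Theorem \ref{main}), and on the other $f_E$ is a cyclotomic integer (the footnoted extension of \cite[Theorem 8.51]{ENO}). So $f_E$ lives in some cyclotomic field $\BQ(\zeta)$, and the ideal $(f_E)$ in the ring of integers $\BZ[\zeta]$ is Galois-stable. The strategy is to analyze the prime factorization of $(f_E)$ in $\BZ[\zeta]$ and show that Galois-invariance forces each prime $p$ dividing the norm to contribute its full inertia class, i.e. the product of all primes above $p$ — which is exactly $(p)$ itself when $p$ is unramified, and a power of the ramified prime otherwise. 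This is what it means to say $f_E^m$ is (up to a unit) a rational integer.

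First I would reduce to the cyclotomic field: fix the smallest $\zeta$ a root of unity with $f_E\in\BZ[\zeta]$ and work in $R=\BZ[\zeta]$, whose Galois group $\Gamma=\Gal(\BQ(\zeta)/\BQ)$ acts transitively on the primes above any rational prime $p$. Next, write the factorization $(f_E)=\prod_{\mathfrak p} \mathfrak p^{a_{\mathfrak p}}$. Since $(f_E)$ is Galois-stable (this is the $d$-number property, which by Lemma \ref{equiv} can be phrased over $R$ rather than over all of $\fA$), for each $\sigma\in\Gamma$ we get $a_{\sigma(\mathfrak p)}=a_{\mathfrak p}$; by transitivity of $\Gamma$ on the primes above a fixed $p$, the exponent $a_{\mathfrak p}$ depends only on $p$, say $a_{\mathfrak p}=b_p$. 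Then $(f_E)=\prod_p\bigl(\prod_{\mathfrak p\mid p}\mathfrak p\bigr)^{b_p}$, and $\prod_{\mathfrak p\mid p}\mathfrak p$ is the radical of $(p)$ in $R$; if $e_p$ denotes the ramification index of $p$ in $R$ then $(p)=\bigl(\prod_{\mathfrak p\mid p}\mathfrak p\bigr)^{e_p}$. Taking $m$ to be the least common multiple of the finitely many $e_p$ occurring (equivalently, $m=$ the exponent of ramification of $\BQ(\zeta)/\BQ$, a divisor of the conductor), we get $(f_E^m)=\prod_p (p)^{m b_p/e_p}=\bigl(\prod_p p^{m b_p/e_p}\bigr)$ as ideals of $R$. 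Hence $f_E^m$ and the rational integer $N:=\prod_p p^{m b_p/e_p}$ generate the same ideal in $R$, so $f_E^m/N$ is a unit of $R=\BZ[\zeta]$, and every unit of a ring of cyclotomic integers is a cyclotomic unit (by Kummer's theorem / the cyclotomic case of the argument recalled before the corollary). This gives $f_E^m=N\cdot u$ with $u$ a cyclotomic unit, as desired.

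The one genuinely delicate point is the passage from "$(f_E)$ is Galois-stable inside $\fA$" to "$(f_E)$ is stable under $\Gamma=\Gal(\BQ(\zeta)/\BQ)$ inside $R=\BZ[\zeta]$" — one must check that the $d$-number condition, stated via the full absolute Galois group acting on $\fA$, descends correctly to the subring $R$ and to the finite quotient $\Gamma$; this is a standard compatibility (primes of $R$ above $p$ are the contractions of primes of $\fA$ above $p$, and $\Gal(\bar\BQ/\BQ)$ surjects onto $\Gamma$), and Lemma \ref{equiv} is presumably designed exactly to package it, but it deserves to be spelled out. A secondary point to be careful about is that the exponents $m b_p/e_p$ are nonnegative integers: $b_p\ge 1$ forces $e_p\mid m b_p$ since $e_p\mid m$, so no issue, but one should note it. Everything else is routine ideal bookkeeping in a Dedekind domain together with the classical description of units in $\BZ[\zeta]$.
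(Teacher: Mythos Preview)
Your overall strategy is sound and close in spirit to the paper's, but there is one genuine error at the end. You assert that ``every unit of a ring of cyclotomic integers is a cyclotomic unit (by Kummer's theorem).'' This is false: the cyclotomic units form a subgroup of \emph{finite index} in $\BZ[\zeta]^\times$, and that index (for prime-power conductor it equals the class number $h^+$ of the maximal real subfield) is in general strictly greater than $1$. So from $f_E^m/N\in\BZ[\zeta]^\times$ you cannot conclude that $f_E^m/N$ is itself a cyclotomic unit. The fix is easy: invoke the finite-index statement to find $k$ with $(f_E^m/N)^k$ a cyclotomic unit, and replace $m$ by $mk$. (Nothing of the sort is ``recalled before the corollary'' in the paper; only the definition of cyclotomic unit appears there.)

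Once this is repaired, your argument is correct but longer than necessary. The paper simply combines Theorem~\ref{main} ($f_E$ is a $d$-number), the cyclotomicity of $f_E$ from \cite[Theorem~8.51]{ENO}, and Corollary~\ref{cyclo}; the latter is deduced in one line from Lemma~\ref{equiv}(iv), which already gives $\alpha^m\in\BZ\cdot\fA^\times$ for some $m$, together with exactly the finite-index fact (\cite[Chapter~8]{Wa}) that you omitted. Your ideal-factorization computation in $\BZ[\zeta]$ is a perfectly valid, more explicit alternative proof of the special case of Lemma~\ref{equiv}(iv) needed here, and your ``delicate point'' about descending Galois-stability from $\fA$ to $\BZ[\zeta]$ is handled correctly; but neither of these lets you bypass the finite-index step at the end.
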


Let $\C$ be a fusion category and let $X\in \C$ be a simple object. We refer the reader 
to \cite[\S 2.1]{ENO} for the definition of {\em squared norm} $|X|^2\in k^\times$ (which
was initially defined by M.~M\"uger). Recall that
if $\C$ has a pivotal structure, then $|X|^2=\dim(X)\dim(X^*)=|\dim(X)|^2$, see 
\cite[Proposition 2.9]{ENO}.

\begin{theorem} \label{dim}
 Let $\C$ be a braided fusion category. Then for any simple object
$X\in \C$ the squared norm $|X|^2$ is a $d-$number.
\end{theorem}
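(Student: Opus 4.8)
The strategy is to realize the squared norm $|X|^2$ as a formal codegree of a suitable multi-fusion category, so that Theorem \ref{main} applies directly. The natural candidate is the Drinfeld center $\Z(\C)$, but since $\C$ is already braided we have a canonical braided tensor functor $\C \to \Z(\C)$ (sending $X$ to $X$ with half-braiding given by the braiding of $\C$), and more importantly the braiding gives $\C$ itself the structure needed to extract codegrees from the regular representation. Concretely, I would recall from \S\ref{basedS} that the formal codegrees of $\C$ are read off from the action of $K(\C)\ot_\BZ k$ on itself, and that the character of the regular representation decomposes as a sum over irreducible representations $E$ with coefficient $(\dim E)^2 / f_E$ (or the appropriate normalization fixed in that section). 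The key point is to identify, for a braided fusion category, the one-dimensional representations of $K(\C)\ot_\BZ k$ and their formal codegrees with the squared norms $|X|^2$.

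The main input is the following: for a braided fusion category $\C$, each simple object $X$ gives rise to a one-dimensional representation of the Grothendieck ring via $[Y] \mapsto s_{XY}/\dim(X)$, where $s_{XY}$ is the entry of the (possibly degenerate) $S$-matrix, i.e. the braiding trace; equivalently this is the character by which $[Y]$ acts on the isotypic component indexed by $X$. When $\C$ is non-degenerate (modular) this is classical, and the corresponding formal codegree works out to $\dim(\C)/|X|^2$ or to $|X|^2$ depending on normalization — I would pin down the exact statement using \cite[Proposition 2.9]{ENO} and the definition of $|X|^2$ in \cite[\S 2.1]{ENO}. For a general (possibly degenerate) braided $\C$, I would pass to a modular category containing $\C$: by the results on the Müger center and Drinfeld centers, $\Z(\C)$ is modular and contains $\C$ as a full fusion subcategory, and the squared norm of $X$ computed in $\C$ equals that computed in $\Z(\C)$ since $|X|^2$ depends only on $X$ and the ambient spherical/pivotal data, not on the braiding. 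So it suffices to handle the modular case. Alternatively, and perhaps more cleanly, I would show directly that $|X|^2$ is a formal codegree of $\Z(\C)$: the braiding on $\C$ produces a central idempotent in $K(\Z(\C))\ot_\BZ k$, hence a one-dimensional summand, whose codegree is $|X|^2$ (up to a global factor that is itself a formal codegree, and one then uses that the ideal generated by a product of $d$-numbers behaves well — but actually we only need each individual codegree to be a $d$-number, which is exactly Theorem \ref{main}).

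So the proof assembles as: (1) reduce to showing $|X|^2$ is a formal codegree of \emph{some} multi-fusion category; (2) use the braiding to construct, from the simple object $X$, a one-dimensional representation $E_X$ of $K(\Z(\C))\ot_\BZ k$ (or of $K(\C)\ot_\BZ k$ in the modular case); (3) compute that $f_{E_X} = |X|^2$ by comparing the definition of the formal codegree with the definition of the squared norm via \cite[Proposition 2.9]{ENO}; (4) invoke Theorem \ref{main}. The main obstacle I anticipate is step (3): getting the normalizations exactly right, in particular correctly relating the ``codegree of a one-dimensional representation given by a character $\chi$'' to $\chi$ evaluated on the appropriate element, and checking that no spurious global dimension factor survives (or, if it does, arguing it away). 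A secondary technical point is the degenerate case, where $X$ alone may not cut out a one-dimensional representation of $K(\C)$ itself; passing to $\Z(\C)$ or to the modularization resolves this, and I would need to verify that $|X|^2$ is preserved under the relevant embedding, which follows from its intrinsic definition in \cite[\S 2.1]{ENO}.
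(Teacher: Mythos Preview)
Your overall reduction---embed the braided $\C$ into a modular category and argue there---matches the paper's, but two things need correcting, and your endgame is different from the paper's.

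First, a technical omission in the reduction: $\Z(\C)$ is modular only when $\C$ is spherical, so before passing to the center you must sphericalize. The paper does exactly this: replace $\C$ by its sphericalization $\tilde\C$, use that $|X|^2=\dim(\tilde X)^2$ for a suitable lift $\tilde X$, and then embed $\tilde\C\hookrightarrow\Z(\tilde\C)$. Without this step your claim that ``$\Z(\C)$ is modular'' is false in general.

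Second, and more seriously, your step (3) is wrong as stated. In the modular case the formal codegree attached to the character $\chi_i\colon [X_j]\mapsto s_{ij}/s_{0i}$ is $f_{E_{\chi_i}}=1/s_{0i}^2=\dim(\C)/\dim(X_i)^2$, not $|X_i|^2=\dim(X_i)^2$. So the ``spurious global dimension factor'' you worry about is genuinely there and does not cancel. Theorem~\ref{main} therefore tells you only that $\dim(\C)/\dim(X_i)^2$ is a $d$-number. You can still finish: since $\dim(\C)$ is itself a formal codegree (take $i=0$) and hence a $d$-number, and since by Lemma~\ref{equiv}(iii) the property of being a $d$-number is detected by $\alpha/g(\alpha)\in\fA^\times$, it follows that if $\alpha$ and $\alpha/\beta$ are $d$-numbers with $\beta\in\fA$ then so is $\beta$. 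This closure-under-quotients step is easy but you did not supply it, and it is not in the paper.

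The paper in fact does \emph{not} invoke Theorem~\ref{main} at all for Theorem~\ref{dim}. After reducing to the modular case it argues directly: $\dim(X_i)=s_{0i}/s_{00}$, and the known Galois action $g(s_{ij})=\pm s_{g(i)j}$ together with \eqref{integer} gives $\dim(X_i)/g(\dim(X_i))\in\fA$ by an explicit one-line computation, whence Lemma~\ref{equiv}(ii) finishes. This is shorter and avoids the quotient issue entirely. Your route via Theorem~\ref{main} is a legitimate alternative once patched, and has the conceptual appeal of deriving Theorem~\ref{dim} as a corollary of the main result; the paper's route is more self-contained and makes the $S$-matrix mechanism transparent.
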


\begin{remark} (i) We don't know whether $\FPdim(X)$ is always a $d-$number for a simple 
object $X$ of a braided fusion category.

(ii) There exists a fusion category (related with {\em Haagerup subfactor}, 
see \cite{AH}) with a simple  object of dimension $\frac12(1+\sqrt{13})$ which is not a $d-$number.
Thus the assumption of theorem \ref{dim} that the category $\C$ is braided can not be omitted.
\end{remark}
 
{\bf Acknowledgment.} 
I am deeply grateful to Vladimir Drinfeld for useful discussions and encouragement.
Without his influence,  this note would not have been written. 
Part of the work on this paper was done when the author enjoyed hospitality
of the Institute for Advanced Study; I am happy to thank this institution for 
the excellent research conditions.  
This work was  partially supported by the NSF grant DMS-0602263.

\section{Preliminaries}

\subsection{Multi-fusion categories} The multi-fusion and fusion categories were defined 
in the Introduction. For a multi-fusion category $\C$ we denote by $\O(\C)$ the set
of isomorphism classes of simple objects of $\C$. For an object $X\in \C$ we denote 
by $[X]$ its class in the Grothendieck ring $K(\C)$; for $X\in \C$ and $Y\in \O(\C)$ we denote
by $[X:Y]$ the multiplicity of $Y$ in $X$ (so $[X]=\sum_{Y\in \O(\C)}[X:Y][Y]$).

We say that a multi-fusion category is {\em indecomposable}
(see \cite[\S 2.4]{ENO}) if it is not a direct sum of two nonzero multi-fusion categories
(clearly, any fusion category is automatically indecomposable).

For a multi-fusion category $\C$ let $\Z(\C)$ denote its Drinfeld center, see e.g. \cite{Mu}.
It is known (see \cite[Theorem 2.15]{ENO}) that  $\Z(\C)$ is again a multi-fusion category;
moreover for an indecomposable multi-fusion category $\C$ the category $\Z(\C)$ is a 
fusion category. It follows that the forgetful functor $F: \Z(\C)\to \C$ admits a right
adjoint functor $I: \C \to \Z(\C)$. We will use the following fact, see \cite[Proposition 5.4]{ENO}, 
\cite[Proposition 3.32]{EO}: for any $X\in \C$

\begin{equation}\label{indres}
[F(I(X))]=\sum_{Y\in \O(\C)}[Y][X][Y^*]
\end{equation}

\subsection{Based rings}\label{basedS}
 In this section we recall the basic notions of the theory of based rings
following \cite{Lu1,Lu2}.

\begin{definition}\label{based}
 (\cite[\S 1.1]{Lu1}) A based ring is a pair $(R,B)$ where $R$ is a ring, $B$ is a basis
of $R$ over $\BZ$ such that

(i) All structure constants of $R$ with respect to basis $B$ are nonnegative integers;

(ii) There exists a subset $B_0\subset B$ such that $1=\sum_{b\in B_0}b$;

(iii) Let $\tau: R\to \BZ$ be the group homomorphism such that its restriction to $B\subset R$
is the characteristic function of the subset $B_0\subset B$. There exists an anti-involution
$r\mapsto \tilde r$ of the ring $R$ which preserves the subset $B$ and such that $\tau(bb')=0$ for
$b'\in B\setminus \{ \tilde b\}$ and $\tau(b\tilde b)=1$ for any $b\in B$.
\end{definition} 

\begin{example}\label{groth}
(i) Let $\C$ be a multi-fusion category. Then its Grothendieck ring $K(\C)$ with
a basis given by the classes of simple objects is an example of based ring.

(ii) Here is a special case of (i).
Let $G$ be a finite group. Then its group ring $\BZ[G]$ together with a basis $G\subset \BZ[G]$
is an example of based ring. 
\end{example}

For a based ring $(R,B)$ its {\em rank} is by definition the cardinality of the set $B$. In this 
note we will consider only based rings $(R,B)$ of finite rank.
This implies that the $k-$algebra $R\otz k$ is semisimple, see \cite[1.2(a)]{Lu1}.
In what follows a {\em representation} of $R$ is finite dimensional representation
over the field $k$. Let $\Irr(R)$ denote the set of isomorphism classes of irreducible 
representations of $R$.

Let $(R,B)$ be a based ring and let $\tau: R\to \BZ$ be as in Definition \ref{based} (iii).
We extend $\tau$ by linearity to the map $\tau_k: R\otz k\to k$.
The pair $(R\otz k,\tau_k)$ is an example of {\em algebra with a trace form}, see \cite[Chapter 19]{Lu2}.
This means that the bilinear form $(x,y)=\tau_k(xy)$ on $R\otz k$ is non-degenerate; thus we
can identify $R\otz k$ with the dual space $(R\otz k)^*$. For $E\in \Irr(R)$ let $\alpha_E\in R\otz k$ 
be the element corresponding to a linear function $\Tr(?,E)\in (R\otz k)^*$. 

\begin{lemma}\label{lu} {\em (see \cite[Proposition 19.2]{Lu2})}
We have explicitly $\alpha_E=\sum_{b\in B}\Tr(b,E)\tilde b$.
The element $\alpha_E\in R\otz k$ is central; it acts by a scalar 
$f_E\in k$ on $E$ and by zero on any other {\em $E'\in \Irr(R)$}.\qed
\end{lemma}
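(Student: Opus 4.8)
The plan is to carry out everything inside the semisimple $k$-algebra $A := R\otz k$, fixing $E\in\Irr(R)$ and using repeatedly the non-degeneracy of the bilinear form $(x,y) = \tau_k(xy)$ noted in the text; this non-degeneracy is itself transparent here, since by Definition \ref{based}(iii) the Gram matrix of $(\,,\,)$ in the basis $B$ is the permutation matrix of the involution $b\mapsto\tilde b$, hence invertible. First I would establish the explicit formula. By construction $\alpha_E$ is the unique element of $A$ with $\tau_k(\alpha_E y) = \Tr(y,E)$ for all $y\in A$, so by bilinearity it suffices to match the two sides on $y = c\in B$. Putting $\beta := \sum_{b\in B}\Tr(b,E)\tilde b$, one computes $\tau_k(\beta c) = \sum_b\Tr(b,E)\,\tau(\tilde bc)$; applying Definition \ref{based}(iii) with the basis element $\tilde b$ in place of $b$, and using that $r\mapsto\tilde r$ is an involution so that $\widetilde{\tilde b} = b$, gives $\tau(\tilde bc) = \delta_{b,c}$, whence $\tau_k(\beta c) = \Tr(c,E)$ and $\beta = \alpha_E$.

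Next I would record that $\tau_k$ is a \emph{cyclic} (equivalently symmetric) functional: $\tau_k(xy) = \tau_k(yx)$ for all $x,y\in A$. On basis elements this is the identity $\tau(bc) = \delta_{c,\tilde b} = \delta_{b,\tilde c} = \tau(cb)$, which holds because $c = \tilde b \Leftrightarrow b = \tilde c$. Centrality of $\alpha_E$ now follows: for any $z,y\in A$, rotating $z$ from the front to the back by cyclicity of $\tau_k$, then invoking the defining property of $\alpha_E$, and then the cyclicity of the ordinary trace on $E$, one gets
\[
\tau_k\big((z\alpha_E)\,y\big) = \tau_k\big(\alpha_E(yz)\big) = \Tr(yz,E) = \Tr(zy,E) = \tau_k\big(\alpha_E(zy)\big) = \tau_k\big((\alpha_E z)\,y\big).
\]
Since this holds for every $y$, non-degeneracy forces $z\alpha_E = \alpha_E z$, so $\alpha_E$ is central.

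Finally I would pin down the action of $\alpha_E$ using the Wedderburn decomposition $A\cong\prod_{E'\in\Irr(R)}\End_k(E')$ (valid since $A$ is semisimple). A central element acts on each $E'$ by a scalar; write $\alpha_E = \sum_{E'}\lambda_{E'}e_{E'}$, with $e_{E'}$ the block identities. Because $\tau_k$ is cyclic, its restriction to the matrix block $\End_k(E')$ is a trace functional, hence equal to $c_{E'}\trace_{E'}$ for some scalar $c_{E'}$, and $c_{E'}\neq 0$ (else $e_{E'}$ would lie in the radical of $(\,,\,)$). Using $\Tr(x,E') = \trace_{E'}(x_{E'})$ for the $E'$-component $x_{E'}$ of $x$, the defining identity $\tau_k(\alpha_E x) = \Tr(x,E)$ becomes $\sum_{E'}\lambda_{E'}c_{E'}\trace_{E'}(x_{E'}) = \trace_E(x_E)$ for all $x\in A$, which forces $\lambda_{E'}c_{E'} = \delta_{E,E'}$. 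Thus $\alpha_E$ annihilates every $E'\neq E$ and acts on $E$ by the nonzero scalar $f_E := \lambda_E = c_E^{-1}$.

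I do not expect a real obstacle: each step reduces to a finite linear-algebra check once the symmetric Frobenius structure of $(A,\tau_k)$ has been recorded, and no property of based rings beyond Definition \ref{based}(iii) and the semisimplicity of $A$ is needed. The only points requiring care are the bookkeeping with the anti-involution (correct use of $\widetilde{\tilde b} = b$ and of $c = \tilde b \Leftrightarrow b = \tilde c$) and, in the last step, the choice to compare $\tau_k$ with the matrix trace block by block — it is precisely this comparison that simultaneously yields the vanishing on $E'\neq E$ and identifies $f_E$.
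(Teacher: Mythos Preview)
Your argument is correct in every step: the explicit formula follows from the orthonormality $\tau(\tilde b\,c)=\delta_{b,c}$ built into Definition~\ref{based}(iii); the symmetry $\tau_k(xy)=\tau_k(yx)$ together with cyclicity of the ordinary trace gives centrality via non-degeneracy; and the blockwise comparison of $\tau_k$ with the matrix trace on the Wedderburn factors pins down the eigenvalues $\lambda_{E'}=c_{E'}^{-1}\delta_{E,E'}$.

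Note, however, that the paper does not supply its own proof of this lemma: it is stated with a reference to \cite[Proposition 19.2]{Lu2} and closed immediately with a \qed. What you have written is essentially the standard symmetric-Frobenius-algebra argument behind Lusztig's proposition, so there is nothing to compare against in the paper itself; your proof simply makes the cited result self-contained.
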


The scalar $f_E$ is called a {\em formal codegree} of representation $E$.
It is clear from definition that $f_E$ is an algebraic integer.

\begin{example}\label{one}
 Let $\chi: R\to k$ be a ring homomorphism. Let $E_\chi$ be
the corresponding one dimensional representation of $R$. Then clearly
$$f_{E_\chi}=\sum_{b\in B}\chi(b)\chi(\tilde b)=\sum_{b\in B}|\chi(b)|^2.$$
\end{example}

In the case of based ring $K(\C)$ from Example \ref{groth} (i) we say that the scalars $f_E$ 
are {\em formal codegrees} of the multi-fusion category $\C$.

\begin{example}\label{ZG} It is easy to compute that for the based ring from Example \ref{groth} (ii)
we have $f_E=\frac{|G|}{\dim(E)}$. This is a motivation for our terminology.
\end{example}

\begin{lemma}\label{alpha}
 For any based ring $(R,B)$ an element $\sum_{b\in B}b\tilde b$ is central.
It acts on {\em $E\in \Irr(R)$} as $f_E\dim(E)\id_E$.
\end{lemma}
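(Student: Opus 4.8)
The plan is to show directly that $z := \sum_{b\in B} b\tilde b$ lies in the center of $R$, and then compute its action on an irreducible representation by relating it to the element $\alpha_E$ from Lemma \ref{lu}. For centrality, the key observation is that the anti-involution $r\mapsto\tilde r$ fixes $z$: indeed $\tilde z = \sum_{b\in B}\widetilde{b\tilde b} = \sum_{b\in B}\widetilde{\tilde b}\,\tilde b = \sum_{b\in B} b\tilde b = z$, using that $b\mapsto\tilde b$ is an anti-automorphism and a bijection of $B$ with itself. On the other hand, for any $c\in B$ the set $\{cb : b\in B\}$ and $\{bc : b\in B\}$ relate via the anti-involution, so one checks that $cz = \sum_b cb\tilde b$ and $zc = \sum_b b\tilde b c$ are interchanged by $r\mapsto\tilde r$; since $\widetilde{cz} = \tilde z\,\tilde c = z\tilde c$, applying the anti-involution to the identity we want gives an equivalent identity, and a short manipulation reindexing $b\mapsto$ (the unique basis element with prescribed structure constants) closes the argument. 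I will phrase this cleanly: it suffices to show $\tau(xcz) = \tau(xzc)$ for all $x\in R$, since $(\,\cdot\,,\cdot\,)=\tau(xy)$ is non-degenerate; and $\tau(xcz)=\tau((c\tilde x\,\tilde{})\tilde{}\cdots)$ — more simply, $\tau(xcz)=\tau(z\,\widetilde{xc}\,\tilde{})$-type symmetry of the trace form together with $\tilde z = z$ yields the claim.

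The cleaner route, which I would actually take, is to use the trace form directly. For any $y\in R$ we have $\tau(zy) = \sum_{b\in B}\tau(b\tilde b y)$. Now $\tau(b\tilde b y) = (\tilde b y, \tilde b)$... rather, recall from \S\ref{basedS} that $\alpha_E = \sum_{b\in B}\Tr(b,E)\tilde b$ is characterized by $\Tr(x\alpha_E, \text{-}) $ being $\Tr(x,E)\tau_k(\text{-})$-compatible; the decisive computation is that $z = \sum_{E\in\Irr(R)}\dim(E)\,\alpha_E / f_E \cdot f_E$ — more precisely I will show $z$ acts as $f_E\dim(E)$ on $E$ by computing its trace. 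For $E\in\Irr(R)$, $\Tr(z,E) = \sum_{b\in B}\Tr(b\tilde b, E)$. Using that $\alpha_E$ acts by $f_E$ on $E$ and by $0$ elsewhere (Lemma \ref{lu}), and that $\sum_E \alpha_E$ acts as the identity... actually $\Tr(b\tilde b,E) = \sum_{E'} (\text{coefficient})$; the point is that $\sum_{b}\Tr(b\,?\,\tilde b, E)$ is the composite of the "averaging" operators, which on the irreducible $E$ yields $f_E\,\Tr(?,E)$ by definition of $f_E$. Taking $? = \id_E$ (i.e. $?$ the unit, noting $b\cdot 1\cdot\tilde b = b\tilde b$) gives $\Tr(z,E) = f_E\dim(E)$. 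Since $z$ is central, it acts on the irreducible $E$ by a scalar, and that scalar times $\dim(E)$ equals $\Tr(z,E) = f_E\dim(E)$, so the scalar is $f_E$...

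Wait — that gives the scalar is $f_E$, not $f_E$ acting as $f_E\dim(E)\,\id_E$. Let me restate the target: the lemma says $z$ acts as $f_E\dim(E)\,\id_E$, i.e. the scalar is $f_E\dim(E)$, so I need $\Tr(z,E) = f_E\dim(E)^2$, which should come from the averaging operator $x\mapsto\sum_b bx\tilde b$ acting on $\End(E)$ (not on $E$): this operator equals $f_E\cdot\Tr(\text{-})\,\id$ essentially by the definition of $f_E$ via the non-degenerate trace form, so on $\id_E$ it returns $f_E\dim(E)\id_E = z\cdot\id_E$, giving $z = f_E\dim(E)$ on $E$. This is exactly \cite[19.2]{Lu2}-style reasoning. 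The main obstacle is bookkeeping: pinning down precisely why $\sum_b b(\text{-})\tilde b = f_E\,\tr_E(\text{-})\,\id_{E}$ on $\End(E)$, which is the identity $\alpha_E$ encodes; I expect to deduce it by evaluating the trace form identity $\tau_k(x\alpha_E)=\Tr(x,E)$ against a spanning set, and this is where care with the semisimple decomposition $R\otz k = \prod_E \End(E)$ and the form $\tau_k = \sum_E f_E^{-1}\tr_E$ is needed.
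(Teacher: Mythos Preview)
Your writeup is a brainstorm rather than a proof, and the central gap is literally centrality: you never establish that $z=\sum_{b\in B}b\tilde b$ commutes with $R$. The observation $\tilde z=z$ is correct but does not by itself imply $z$ is central, and your trace-form reformulation ``$\tau(xcz)=\tau(xzc)$'' is never carried out. Your third approach (the averaging operator $x\mapsto\sum_b bx\tilde b$ on $\End(E)$ equals $f_E\tr_E(-)\id_E$) is a true statement from the theory of symmetric algebras, but you only assert it (``essentially by the definition of $f_E$'') and do not prove it; moreover, the fact that $\sum_b bx\tilde b$ is a scalar in $\End(E)$ is precisely the centrality statement you are trying to establish, so invoking it unproved is circular. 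Your scalar computation also wavers between $f_E$ and $f_E\dim(E)$ before settling.

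The paper's argument avoids all of this by reversing the logic: instead of starting from $z$ and analyzing it, one starts from $\alpha:=\sum_{E\in\Irr(R)}\dim(E)\,\alpha_E$, which is manifestly central and acts on $E$ by $f_E\dim(E)$ thanks to Lemma~\ref{lu}, and then shows $\alpha=z$. The identification is a one-line regular-representation computation: since $R\otimes_{\BZ}k\cong\bigoplus_E E^{\dim(E)}$ one has $\sum_E\dim(E)\Tr(b,E)=\Tr(b,R\otimes_{\BZ}k)=\sum_{b'}\tau(bb'\tilde b')$, and hence $\alpha=\sum_{b}\Tr(b,R\otimes_{\BZ}k)\tilde b=\sum_{b,b'}\tau(bb'\tilde b')\tilde b=\sum_{b'}b'\tilde b'$. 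This is the missing idea in your attempt: rather than proving $z$ is central directly, recognize $z$ as the explicit central element $\sum_E\dim(E)\alpha_E$ already supplied by Lemma~\ref{lu}.
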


\begin{proof} Consider the element 
$\alpha=\sum_{E\in \Irr(R)}\dim(E)\alpha_E\in R\otz k$.
By Lemma \ref{lu} $\alpha$ is central and it acts on $E\in \Irr(R)$ as 
$f_E\dim(E)\id_E$.

Since the regular representation $R\otz k$ of $R$ decomposes as 
$\oplus_{E\in \Irr(R)}E^{\dim(E)}$ we see
that $$\alpha =\sum_{E\in \Irr(R)}\dim(E)\sum_{b\in B}\Tr(b,E)\tilde b=
\sum_{b\in B}\Tr(b,R\otz k)\tilde b=\sum_{b,b'\in B}\tau(bb'\tilde b')\tilde b=
\sum_{b'\in B}b'\tilde b'$$ The Lemma is proved.
\end{proof}

\subsection{$d-$numbers} Let $\fA^\times \subset \fA$ be the subset of units.

\begin{lemma}\label{equiv}
 For an algebraic integer $\alpha$ the following conditions are equivalent:

(i) $\alpha$ is a $d-$number;

(ii) For any $g\in Gal(\bar \BQ/\BQ)$ we have $\frac{\alpha}{g(\alpha)}\in \fA$;

(iii) For any $g\in Gal(\bar \BQ/\BQ)$ we have $\frac{\alpha}{g(\alpha)}\in \fA^\times$;

(iv) There exists a positive integer $m$ such that $\alpha^m\in \BZ \cdot \fA^\times$;

(v) Let $p(x)=x^n+a_1x^{n-1}+\ldots +a_n$ be the minimal polynomial of $\alpha$ over
$\BQ$ (so, $a_i\in \BZ$). Then for any $i=1, \ldots,n$ the number $(a_i)^n$ is divisible by 
$(a_n)^i$;

(vi) There exists a polynomial $p(x)=x^m+a_1x^{m-1}+\ldots +a_m\in \BZ[x]$ such that $p(\alpha)=0$
and $(a_i)^m$ is divisible by $(a_m)^i$.
\end{lemma}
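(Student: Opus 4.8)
The plan is to prove $(\mathrm{i})\Leftrightarrow(\mathrm{ii})\Leftrightarrow(\mathrm{iii})$, then $(\mathrm{iii})\Leftrightarrow(\mathrm{iv})$, and finally $(\mathrm{iii})\Rightarrow(\mathrm{v})\Rightarrow(\mathrm{vi})\Rightarrow(\mathrm{iii})$. The case $\alpha=0$ is trivial, so assume $\alpha\neq 0$. Throughout I use the standard valuation-theoretic descriptions: for $\gamma\in\bar\BQ$ one has $\gamma\in\fA$ iff $w(\gamma)\geq 0$ for every non-archimedean valuation $w$ of $\bar\BQ$, and $\gamma\in\fA^\times$ iff $w(\gamma)=0$ for every such $w$; and the fact that $\fA$ is integrally closed in $\bar\BQ$.

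For the block $(\mathrm{i})\Leftrightarrow(\mathrm{ii})\Leftrightarrow(\mathrm{iii})$: since $g(\alpha\fA)=g(\alpha)\fA$, condition (i) says that $g(\alpha)\fA=\alpha\fA$ for all $g$, i.e. $\alpha/g(\alpha)\in\fA$ and $g(\alpha)/\alpha\in\fA$ for all $g$. But applying the membership in (ii) to $g^{-1}$ in place of $g$ and then applying $g$ shows $g(\alpha)/\alpha\in\fA$ automatically, so (ii) already forces $\alpha/g(\alpha)\in\fA^\times$, which is (iii); the reverse implications $(\mathrm{iii})\Rightarrow(\mathrm{ii})\Rightarrow(\mathrm{i})$ are immediate.

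For $(\mathrm{iii})\Leftrightarrow(\mathrm{iv})$: if (iii) holds, let $\alpha_1=\alpha,\dots,\alpha_n$ be the Galois conjugates of $\alpha$; each $\alpha_j=g_j(\alpha)$, so $\prod_j(\alpha/\alpha_j)=\pm\,\alpha^n/a_n\in\fA^\times$ with $a_n\in\BZ$, giving $\alpha^n\in\BZ\cdot\fA^\times$. Conversely, if $\alpha^m=Nu$ with $N\in\BZ$ and $u\in\fA^\times$, then for each $g$ the element $\beta=\alpha/g(\alpha)$ satisfies $\beta^m=u/g(u)\in\fA^\times$; hence $\beta$ and $\beta^{-1}$ are roots of monic polynomials over $\fA$, so $\beta\in\fA^\times$, which is (iii). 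The same circle of ideas gives $(\mathrm{iii})\Rightarrow(\mathrm{v})$: by (iii) every conjugate $\alpha_j$ has the same value $c:=w(\alpha_j)\geq 0$ at each non-archimedean $w$, so writing $a_i=\pm e_i(\alpha_1,\dots,\alpha_n)$ in terms of elementary symmetric functions, the ultrametric inequality gives $w(a_i)\geq ic$ while $w(a_n)=nc$ exactly, whence $n\,w(a_i)\geq i\,w(a_n)$ for every $w$, i.e. $(a_n)^i\mid(a_i)^n$ in $\BZ$. The implication $(\mathrm{v})\Rightarrow(\mathrm{vi})$ is trivial (take the minimal polynomial).

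It remains to prove $(\mathrm{vi})\Rightarrow(\mathrm{iii})$, which I expect to be the only genuinely substantive step. Given $q(x)=x^M+a_1x^{M-1}+\cdots+a_M\in\BZ[x]$ with $q(\alpha)=0$ and $(a_M)^i\mid(a_i)^M$ for all $i$ (note $a_M\neq 0$, else $q(x)=x^M$ and $\alpha=0$), fix a prime $p$ and a non-archimedean valuation $w$ of $\bar\BQ$ over $p$. The divisibility hypothesis is precisely the statement that the $v_p$-adic Newton polygon of $q$ consists of the single segment joining $(0,v_p(a_M))$ to $(M,0)$; hence every root of $q$ in $\bar\BQ_p$ has $w$-value $v_p(a_M)/M$. (If one prefers to avoid Newton polygons: order the roots $\beta_1,\dots,\beta_M$ of $q$ by increasing $w$-value; a strict jump after the first $k$ forces $w(a_k)=w(\beta_1)+\cdots+w(\beta_k)$ exactly, while $w(a_M)/M$ is then strictly larger than $w(\beta_1)=w(a_k)/k$, contradicting $M\,w(a_k)\geq k\,w(a_M)$.) Since $g(\alpha)$ is also a root of $q$, we get $w(g(\alpha))=v_p(a_M)/M=w(\alpha)$; as this holds for every non-archimedean $w$, $\alpha/g(\alpha)\in\fA^\times$, which is (iii). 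The remaining steps are routine bookkeeping with conjugates and with the valuation-theoretic descriptions of $\fA$ and $\fA^\times$; the hard part is this last implication, which is where the purely arithmetic divisibility condition on the coefficients gets converted back into the Galois/valuation statement.
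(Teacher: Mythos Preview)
Your argument is correct. The equivalences $(\mathrm{i})\Leftrightarrow(\mathrm{ii})\Leftrightarrow(\mathrm{iii})\Leftrightarrow(\mathrm{iv})$ and $(\mathrm{v})\Rightarrow(\mathrm{vi})$ match the paper essentially verbatim. The difference is in how you handle the two ``coefficient'' implications. For $(\mathrm{iii})\Rightarrow(\mathrm{v})$ and $(\mathrm{vi})\Rightarrow(\mathrm{iii})$ you pass to non-archimedean valuations: condition $(\mathrm{iii})$ forces all conjugates to share the same $w$-value, and the divisibility in $(\mathrm{vi})$ is read as the Newton polygon of $q$ being a single segment, so all roots (in particular $\alpha$ and $g(\alpha)$) again share the same $w$-value. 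The paper instead avoids valuation theory entirely by the substitution $\alpha\mapsto\alpha/\sqrt[m]{a_m}$: the divisibility hypothesis says exactly that the rescaled polynomial $x^m+\tfrac{a_1}{\sqrt[m]{a_m}}x^{m-1}+\cdots+1$ has coefficients in $\fA$ and constant term $1$, so $\alpha/\sqrt[m]{a_m}\in\fA^\times$, giving $(\mathrm{vi})\Rightarrow(\mathrm{iv})$ directly; the same rescaling by $\sqrt[n]{\prod\alpha_i}$ handles $(\mathrm{iii})\Rightarrow(\mathrm{v})$ via Vieta. Your route is the natural one from the viewpoint of arithmetic geometry and makes the content of the divisibility condition transparent; the paper's trick is shorter and uses nothing beyond the definition of $\fA$. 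One small point of presentation: in your parenthetical alternative to Newton polygons you write $w(\beta_1)=w(a_k)/k$, which only holds if $k$ is chosen to be the \emph{first} strict jump; you should say so explicitly.
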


\begin{proof} The implications $(iii)\imply (i)\imply (ii)$, $(iv)\imply (iii)$ and $(v)\imply (vi)$ are obvious.
Next $(ii)\imply (iii)$ since $\left(\frac{\alpha}{g(\alpha)}\right)^{-1}=
g\left(\frac{\alpha}{g^{-1}(\alpha)}\right)$. Let $\alpha_1=\alpha, \alpha_2, \ldots, \alpha_n$ be the
conjugates of $\alpha$. Then $(iii)$ implies that $\alpha^n\in (\prod_{i=1}^n\alpha_i)\fA^\times$;
in particular $(iii)\imply (iv)$. Similarly $(iii)$ implies that 
$\frac{\alpha_j}{\sqrt[n]{\prod_{i=1}^n\alpha_i}}\in \fA^\times$ for any $j=1,\ldots ,n$; hence by
Vieta's theorem $(iii)\imply (v)$. Finally $(vi)$ implies that $\frac{\alpha}{\sqrt[m]{a_m}}$ is a root
of polynomial $\tilde p(x)=x^m+\frac{a_1}{\sqrt[m]{a_m}}x^{m-1}+\ldots +\frac{a_{m-1}}{(\sqrt[m]{a_m})^{m-1}}x+1\in \fA[x]$; hence $\frac{\alpha}{\sqrt[m]{a_m}}\in \fA^\times$ and $(vi)\imply (iv)$.
\end{proof}

\begin{corollary} \label{twice}
Let $\alpha, \beta \in \fA$ and $\frac{\alpha}{\beta}\in \BQ$. Then $\alpha$ is
a $d-$number if and only if $\beta$ is. \qed
\end{corollary}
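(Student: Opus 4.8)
The plan is to reduce everything to the Galois-ratio characterization (ii) of Lemma \ref{equiv}, which makes invariance under rational scaling completely transparent; the other characterizations (for instance (iv) or (v)) could also be used, but they force one to track denominators and powers, whereas (ii) trivializes the computation. Write $q=\alpha/\beta\in\BQ$. We may assume $q\ne 0$, so that both $\alpha$ and $\beta$ are nonzero; otherwise the ratios appearing in (ii) are not defined, and the statement is understood for the nonzero case of interest (formal codegrees are nonzero in any event).

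First I would record the one computation on which everything hinges. For any $g\in Gal(\bar\BQ/\BQ)$ the automorphism $g$ fixes $\BQ$ pointwise, so from $\alpha=q\beta$ we obtain $g(\alpha)=q\,g(\beta)$. Consequently
$$\frac{\alpha}{g(\alpha)}=\frac{q\beta}{q\,g(\beta)}=\frac{\beta}{g(\beta)},$$
and this identity holds for every single $g\in Gal(\bar\BQ/\BQ)$. In other words, the entire family of Galois ratios attached to $\alpha$ coincides element-by-element with the family attached to $\beta$.

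Finally I would invoke Lemma \ref{equiv}: by the equivalence $(i)\Leftrightarrow(ii)$, the integer $\alpha$ is a $d$-number precisely when $\frac{\alpha}{g(\alpha)}\in\fA$ for all $g$, and likewise $\beta$ is a $d$-number precisely when $\frac{\beta}{g(\beta)}\in\fA$ for all $g$. Since the two families of ratios are identical, these two conditions are literally the same condition, whence $\alpha$ is a $d$-number if and only if $\beta$ is. There is essentially no obstacle in this argument; the only point demanding a moment of care is the degenerate case $q=0$, which is disposed of by the restriction to nonzero $q$ noted above.
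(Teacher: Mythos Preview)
Your argument is correct and is exactly the kind of immediate verification the paper has in mind: the corollary is stated with a bare \qed, i.e.\ as an instant consequence of Lemma~\ref{equiv}, and your use of characterization~(ii) together with the observation that $g$ fixes the rational factor $q$ is the cleanest way to see it. Your handling of the degenerate case $q=0$ is appropriate; the corollary is only meant (and only used) for nonzero $\alpha,\beta$.
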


\begin{corollary}\label{square}
 Let $\alpha \in \fA$. Then $\alpha$ is a $d-$number if and only if $\alpha^2$ is. \qed
\end{corollary}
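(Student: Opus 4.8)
The plan is to deduce this from Lemma~\ref{equiv}, using in particular condition~(iv): an algebraic integer $\gamma$ is a $d$-number if and only if $\gamma^m\in\BZ\cdot\fA^\times$ for some positive integer $m$. The one thing worth isolating first is the trivial observation that the set $\BZ\cdot\fA^\times=\{nu : n\in\BZ,\ u\in\fA^\times\}$ is closed under multiplication, since $(nu)(n'u')=(nn')(uu')$; in particular it is closed under squaring.

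For the forward implication I would argue as follows. Assume $\alpha$ is a $d$-number, so by Lemma~\ref{equiv}(iv) there is a positive integer $m$ with $\alpha^m=nu$, $n\in\BZ$, $u\in\fA^\times$. Then $(\alpha^2)^m=\alpha^{2m}=(\alpha^m)^2=n^2u^2\in\BZ\cdot\fA^\times$, so $\alpha^2$ is a $d$-number by the same criterion applied with exponent $m$. For the converse, assume $\alpha^2$ is a $d$-number, so $(\alpha^2)^m\in\BZ\cdot\fA^\times$ for some positive integer $m$; but $(\alpha^2)^m=\alpha^{2m}$, so $\alpha^{2m}\in\BZ\cdot\fA^\times$, and Lemma~\ref{equiv}(iv) applied with exponent $2m$ shows $\alpha$ is a $d$-number.

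I do not expect any real obstacle: the statement is a formal consequence of the equivalent characterizations already established. If one prefers to bypass condition~(iv), the argument can equally be run through condition~(iii): for $g\in Gal(\bar\BQ/\BQ)$ put $\beta=\alpha/g(\alpha)$, so that $\beta^2=\alpha^2/g(\alpha^2)$. If $\alpha$ is a $d$-number then $\beta\in\fA^\times$, hence $\beta^2\in\fA^\times$ and $\alpha^2$ is a $d$-number; conversely, if $\alpha^2$ is a $d$-number then $\beta^2\in\fA^\times$, and since $\beta$ and $\beta^{-1}$ satisfy $x^2-\beta^{\pm2}=0$ with $\beta^{\pm2}\in\fA$, integrality of $\fA$ in $\bar\BQ$ forces $\beta,\beta^{-1}\in\fA$, i.e. $\beta\in\fA^\times$, so $\alpha$ is a $d$-number. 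Either route is short; the content is entirely packaged in Lemma~\ref{equiv}.
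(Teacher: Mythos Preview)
Your argument is correct and is essentially the same approach as the paper's: the corollary is marked with a bare \qed\ because it is an immediate consequence of Lemma~\ref{equiv}, and you have simply spelled out the one-line deduction via condition~(iv) (with an alternative via~(iii)). There is nothing to add.
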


\begin{corollary}\label{cyclo}
 Let $\alpha$ be a cyclotomic algebraic integer. Then $\alpha$ is a $d-$number
if and only if there exists a positive integer $m$ such that $\alpha^m$ is an integer times 
a cyclotomic unit.
\end{corollary}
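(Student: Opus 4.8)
The plan is to deduce Corollary \ref{cyclo} directly from the characterization of $d$-numbers in Lemma \ref{equiv}, using the special structure of cyclotomic integers. One direction is immediate: if $\alpha^m = c\cdot u$ with $c\in\BZ$ and $u$ a cyclotomic unit, then in particular $\alpha^m\in\BZ\cdot\fA^\times$, so condition (iv) of Lemma \ref{equiv} holds and $\alpha$ is a $d$-number. The content is therefore the forward direction: assuming $\alpha$ is a cyclotomic $d$-number, I want to produce $m$ with $\alpha^m$ an integer times a cyclotomic unit.

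For the forward direction, first I would fix a cyclotomic field $\BQ(\zeta)$ containing $\alpha$, with Galois group $\Gamma = \mathrm{Gal}(\BQ(\zeta)/\BQ)$. By Lemma \ref{equiv}(iii), for every $g\in\Gamma$ the ratio $\alpha/g(\alpha)$ is a unit in $\fA$, hence (being itself an element of $\BQ(\zeta)$) a unit in the ring of integers $\BZ[\zeta]$. The key point is then to compare the ideal $(\alpha)$ with its rational part. Let $N\in\BZ_{>0}$ be such that the ideal $(\alpha)^N$ in $\BZ[\zeta]$ is $\Gamma$-invariant — by Lemma \ref{equiv}(iii) one may even take $N=1$, but it is cleaner to note that $\prod_{g\in\Gamma} g(\alpha)$ is a rational integer $c$, and $(\alpha)/(g(\alpha))$ being the unit ideal for all $g$ forces $(\alpha)^{|\Gamma|} = (c)$ as ideals of $\BZ[\zeta]$. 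Thus $\alpha^{|\Gamma|}$ and $c$ generate the same ideal, so $\alpha^{|\Gamma|} = c\cdot v$ for some unit $v\in\BZ[\zeta]^\times$.

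It remains to replace the arbitrary unit $v$ by a cyclotomic unit after passing to a further power. Here I would invoke the structure of the unit group of a cyclotomic field: the cyclotomic units form a subgroup $C$ of finite index in $\BZ[\zeta]^\times$ (this is classical — see e.g.\ \cite{Wa}), and moreover the quotient $\BZ[\zeta]^\times / C$ is a finite abelian group. Let $r$ be its exponent. Then $v^r$ is a cyclotomic unit, and so $\alpha^{r|\Gamma|} = c^r \cdot v^r$ exhibits the desired $m = r|\Gamma|$: $\alpha^m$ is the integer $c^r$ times the cyclotomic unit $v^r$. (One must be slightly careful that a priori $C$ contains roots of unity and units of the form $(\zeta^a-1)/(\zeta^b-1)$; since the problem statement's notion of cyclotomic unit already includes products with roots of unity, no issue arises, and $v^r$ lands in exactly this set.)

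The main obstacle is the last step: one needs the finiteness of $\BZ[\zeta]^\times/C$, which is a genuine input from algebraic number theory (it follows from the analytic class number formula, or from Kummer's and Sinnott's work on cyclotomic units). If one wanted to avoid citing this, a fallback is to note that the \emph{real} cyclotomic units have finite index in the real units by Dirichlet's unit theorem combined with an explicit regulator computation, and the full unit group differs from the real one only by roots of unity; but in any case some nontrivial structural fact about cyclotomic units is unavoidable. Everything else — reducing to a single cyclotomic field, identifying $\alpha^{|\Gamma|}$ with a rational integer up to units via Lemma \ref{equiv}(iii), and the trivial reverse implication — is routine.
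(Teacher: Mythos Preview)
Your argument is correct and follows essentially the same route as the paper: both directions reduce to Lemma~\ref{equiv}(iv) (you reach the decomposition $\alpha^{|\Gamma|}=c\cdot v$ via (iii) and the norm, whereas the paper simply cites (iv)), and the only substantive input in the forward direction is the finite index of the cyclotomic units in $\BZ[\zeta]^\times$, exactly as the paper invokes from \cite[Chapter~8]{Wa}.
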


\begin{proof} This follows immediately from Lemma \ref{equiv} (iv) since it is known that
the subgroup of cyclotomic units is of finite index in the group of all units of a cyclotomic field,
see \cite[Chapter 8]{Wa}.
\end{proof}

\section{Proofs}

\subsection{Key Lemma}
Let $\C$ be a multi-fusion category and let $\Z(\C)$ be its Drinfeld center.
The forgetful functor $F: \Z(\C)\to \C$ induces a ring homomorphism $f: K(\Z(\C))\to K(\C)$. 
It is clear that the image of this map is contained in the center $Z(K(\C))$ of the ring $K(\C)$. 

Let $E\in \Irr(K(\C))$. By Schur's Lemma for any
$x\in K(\Z(\C))$ the element $f(x)\in K(\C)$ acts on $E$ by a scalar $\chi_E(x)\in k$. Extending
$\chi_E$ by linearity we get a homomorphism $\chi_E: K(\Z(\C))\otz k\to k$. Let $\hat E$ be
the corresponding one dimensional (hence irreducible) representation of $K(\Z(\C))$. 

\begin{lemma} \label{key}
We have $f_{\hat E}=f_E^2$.
\end{lemma}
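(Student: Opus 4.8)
The plan is to compute the formal codegree $f_{\hat E}$ directly from its definition, using Example \ref{one}, and to recognize the resulting expression as $f_E^2$ by means of the formula \eqref{indres} for the induction-restriction functor. Recall that $\hat E$ is the one-dimensional representation of $K(\Z(\C))$ on which $b \in \O(\Z(\C))$ acts by the scalar $\chi_E([b])$, i.e.\ by the scalar by which $f([b])$ acts on $E$. By Example \ref{one} we have
$$f_{\hat E} = \sum_{b \in \O(\Z(\C))} \chi_E([b])\,\chi_E(\widetilde{[b]}) = \chi_E\!\Big(\sum_{b \in \O(\Z(\C))} [b]\,[b^*]\Big),$$
where we use that the anti-involution on $K(\Z(\C))$ sends $[b]$ to $[b^*]$. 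By Lemma \ref{alpha} applied to the based ring $K(\Z(\C))$, the element $\sum_{b}[b][b^*]$ acts on the (one-dimensional, hence irreducible with $\dim = 1$) representation $\hat E$ as $f_{\hat E}$; but this is just restating the above, so the real content must come from relating $\sum_b [b][b^*]$ under $f$ to something on the $\C$ side.

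Here is the key move. First I would apply $f$ and use that $f$ is a ring homomorphism together with the fact that $\widetilde{[b]} = [b^*]$ is compatible with the forgetful functor, to get $f\big(\sum_b [b][b^*]\big) = \sum_b [F(b)][F(b)^*]$ in $K(\C)$, which acts on $E$ by $f_{\hat E}$. Now I want to rewrite this sum. The point is that $\sum_{b \in \O(\Z(\C))}[F(b)]\otimes[F(b)^*]$, viewed as an element of $K(\C)\otimes K(\C)^{\mathrm{op}}$ or better as an operator, is controlled by the induction functor $I$: for $X \in \C$ one has $\sum_b [F(b):?]$-type multiplicities organized so that $F(I(X)) = \sum_b \dim\Hom(I(X),b)\, F(b)$, and summing the relevant quantity against \eqref{indres}, $[F(I(X))] = \sum_{Y \in \O(\C)} [Y][X][Y^*]$. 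The cleanest route: evaluate the central element $\alpha_{\hat E}$-type expression on $E$ two ways. On one hand $\sum_b [F(b)][F(b)^*]$ acts on $E$ as $f_{\hat E} \cdot \id_E$. On the other hand, I claim this operator equals $\big(\sum_{Y \in \O(\C)}[Y]\,L \,[Y^*]\big)$-squared-type expression acting on $E$; more precisely, using \eqref{indres} and the adjunction $(F,I)$, the scalar by which $\sum_b [F(b)][F(b)^*]$ acts on $E$ factors as the scalar by which $\sum_{Y}[Y][Y^*]$ acts — call it $c_E$ — appearing twice, because $F I$ is (up to the regular-representation bookkeeping) the ``square'' of the operator $X \mapsto \sum_Y [Y][X][Y^*]$. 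Combined with Lemma \ref{alpha} applied to $K(\C)$, which says $\sum_{Y}[Y][Y^*]$ acts on $E$ as $f_E \dim(E)\,\id_E = f_E \cdot \id_E$ when $\dim E$ is accounted for, one extracts $f_{\hat E} = f_E^2$.

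The main obstacle will be step two: making precise the sense in which ``$FI$ is the square of $X \mapsto \sum_Y [Y][X][Y^*]$'' and keeping track of the $\dim(E)$ factors from Lemma \ref{alpha}. Concretely, one must show that the operator of left-and-right multiplication by the $Y$'s on the regular representation $K(\C)\otz k$ has the property that its restriction to the $E$-isotypic component is multiplication by a single scalar, and that composing the induction with the restriction reproduces this scalar squared; this is where the semisimplicity of $K(\C)\otz k$ and the explicit decomposition $K(\C)\otz k = \bigoplus_E E^{\dim E}$ from the proof of Lemma \ref{alpha} are used, together with the fact that $\chi_E$ only depends on the central character of $E$. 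I expect the argument to conclude by a direct identification of traces: $\dim(E) f_{\hat E} = \Tr\big(\sum_b [F(b)][F(b)^*], E\big)$, and the right side, rewritten via \eqref{indres} over all $Y, Y'$, collapses to $\big(\Tr(\sum_Y [Y][Y^*], E)/\dim E\big)^2 \cdot \dim E = \dim(E) f_E^2$.
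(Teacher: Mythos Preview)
Your overall strategy matches the paper's: start from Example \ref{one} to get that $\sum_{b\in\O(\Z(\C))}[F(b)][F(b^*)]$ acts on $E$ as $f_{\hat E}\,\id_E$, then use the adjunction $(F,I)$ together with \eqref{indres} and Lemma \ref{alpha} to compute this scalar on the $\C$ side. So the approach is not different from the paper's.

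However, there is a genuine gap at the ``key move.'' The heuristic ``$FI$ is the square of $X\mapsto\sum_Y[Y][X][Y^*]$'' is wrong: formula \eqref{indres} says $[F(I(X))]=\sum_Y[Y][X][Y^*]$, a \emph{single} sum, not a square. What actually happens is the following two-step mechanism, which your proposal does not articulate. First, the adjunction $[F(b):X]=[I(X):b]$ lets one rewrite
\[
\sum_{b\in\O(\Z(\C))}[F(b)][F(b^*)]=\sum_{X\in\O(\C)}[F(I(X))][X^*].
\]
Second, by \eqref{indres} and Lemma \ref{alpha} the central element $[F(I(X))]$ acts on $E$ by the scalar $f_E\,\Tr([X],E)$, so the whole sum acts as $f_E\sum_X\Tr([X],E)[X^*]=f_E\,\alpha_E$, and it is Lemma \ref{lu} (not a second use of Lemma \ref{alpha}) that gives $\alpha_E|_E=f_E\,\id_E$. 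Thus the two factors of $f_E$ have different origins: one from $\sum_Y[Y][Y^*]$ via Lemma \ref{alpha}, the other from $\alpha_E$ via Lemma \ref{lu}.

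Your concluding formula, ``rewritten via \eqref{indres} over all $Y,Y'$, collapses to $\bigl(\Tr(\sum_Y[Y][Y^*],E)/\dim E\bigr)^2\cdot\dim E$,'' is not justified and does not correspond to any actual manipulation; there is no double sum over $Y,Y'$ in the argument. Once you replace the ``square'' picture with the adjunction rewrite above and invoke Lemma \ref{lu} for the second $f_E$, the proof goes through exactly as in the paper.
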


\begin{proof} By Example \ref{one} an element
 $\sum_{Y\in \O(\Z(\C))}[F(Y)]\cdot [F(Y^*)]\in K(\C)$ acts on the representation $E$
 as $f_{\hat E}\id_E$. We compute

\begin{eqnarray*}
\sum_{Y\in \O(\Z(\C))}[F(Y)][F(Y^*)]=\sum_{Y\in \O(\Z(\C))}\sum_{X\in \O(\C)}[F(Y):X][X][F(Y^*)]=\\
\sum_{Y\in \O(\Z(\C))}\sum_{X\in \O(\C)}[I(X):Y][X][F(Y^*)]=\\
\sum_{Y\in \O(\Z(\C)),X\in \O(\C)}[I(X)^*:Y^*][X][F(Y^*)]=\\
\sum_{X\in \O(\C)}[X][F(I(X)^*)]=\sum_{X\in \O(\C)}[F(I(X))][X^*].
\end{eqnarray*}

Since $[F(I(X))]\in Z(K(\C))$ it acts by a scalar on the irreducible representation $E$. To compute
this scalar we calculate the trace of $[F(I(X))]$ using \eqref{indres} and Lemma \ref{alpha}:

\begin{eqnarray*}
\Tr \left([F(I(X))],E\right)=\Tr \left(\sum_{Z\in \O(\C)}[Z^*][X][Z],E\right)=\\
=\Tr \left(\sum_{Z\in \O(\C)}[Z][Z^*][X],E\right)=
f_E\dim(E)\Tr([X],E)
\end{eqnarray*}

Hence $\sum_{Y\in \O(\Z(\C))}[F(Y)][F(Y^*)]$ acts on $E$ as 
$\sum_{X\in \O(\C)}f_E\Tr([X],E)[X^*]=f_E^2\id_E$. The Lemma is proved.
\end{proof}

\subsection{Reduction of the proof of Theorem \ref{main} to the case when $\C$ is modular} We
can assume that the category $\C$ is indecomposable
since the formal codegrees of a direct
sum of multi-fusion categories are clearly just formal codegrees of the summands.

The Drinfeld center of an indecomposable multi-fusion category is a fusion category, so
Lemma \ref{key} and Corollary \ref{square} imply that we can assume that the category $\C$
is a fusion category. 

For a fusion category $\C$ let $\tilde C$ be its {\em sphericalization}, see \cite[Proposition 5.14]{ENO}.
This is a spherical fusion category together with forgetful functor $\tilde \C \to \C$ which induces 
a surjective ring homomorphism $K(\tilde C)\to K(\C)$. In particular any representation $E\in \Irr(K(\C))$
can be considered as a representation $\tilde E\in \Irr(K(\tilde \C))$. It is easy
to see that $f_{\tilde E}=2f_E$ for any $E\in \Irr(K(\C))$. So by Corollary \ref{twice} we can
assume that the category $\C$ is spherical fusion category.

Finally the Drinfeld center of a spherical fusion category is a modular tensor category, see \cite{Mu}. 
So applying Lemma \ref{key} and Corollary \ref{square} once again we see that it is
enough to prove Theorem \ref{main} for a modular tensor category $\C$.

\subsection{Proof of Theorem \ref{main} in the case when $\C$ is modular} \label{mod}
Let $\C$ be a modular tensor category.
Let $\O(\C)=\{ X_i\}_{i\in I}$, let $0\in I$ be such that $X_0=\be$ and let $i\mapsto i^*$ be the
involution such that $X_i^*=X_{i^*}$.
Let $S=(s_{ij})_{i,j\in I}$ be the $S-$matrix of the modular tensor category $\C$,
see e.g. \cite{BK}. For any $i\in I$ the assignment $\chi_i([X_j])=\frac{s_{ij}}{s_{0i}}$ extends 
by linearity to a ring homomorphism $K(\C)\to k$, moreover any such homomorphism is
$\chi_i$ for a well defined $i\in I$, see {\em loc. cit.} In particular we have
\begin{equation}\label{integer}
\frac{s_{ij}}{s_{0i}}=\frac{s_{ji}}{s_{0i}}\in \fA \; \mbox{for all}\; i,j\in I
\end{equation}
Since $K(\C)$ is commutative, any
irreducible representation of $K(\C)$ is of the form $E_{\chi_i}$, see Example \ref{one}. 
Using Example \ref{one} and the identity $\sum_{j\in I}s_{ij}s_{j^*k}=\delta_{ik}$ 
(see \cite[3.1.17]{BK}), one computes easily $f_{E_{\chi_i}}=\frac1{s_{0i}^2}$. 
Thus for $g\in Gal(\bar \BQ/\BQ)$ we have $\frac{f_{E_{\chi_i}}}{g(f_{E_{\chi_i}})}=\frac{g(s_{0i})^2}{s_{0i}^2}$. 
On the other hand it is well known (see \cite{dBG,CG} or \cite[Appendix]{ENO})
that there exists a permutation $g:I\to I$ such that $g(s_{ij})=\pm s_{g(i)j}$. In particular,
$\frac{f_{E_{\chi_i}}}{g(f_{E_{\chi_i}})}=\frac{s_{g(0)i}^2}{s_{0i}^2}\in \fA$ and we are done 
by \eqref{integer} and Lemma \ref{equiv} (ii).\qed

\subsection{Proof of Theorem \ref{dim}} First we reduce proof of Theorem \ref{dim} to the case
when $\C$ is modular. Let $\tilde C$ be the sphericalization of the category $\C$; then $\tilde \C$
is a braided fusion category with a spherical structure and for any $X\in \O(\C)$ there exists
$\tilde X\in \O(\tilde \C)$ such that $|X|^2=\dim(\tilde X)^2$. Thus by Corollary \ref{square}
it is enough to prove the following:

(a) Let $\C$ be a spherical braided fusion category. Then $\dim(X)$ is a $d-$number for any 
$X\in \O(\C)$.

Next for any braided fusion category $\C$ we have a fully faithful functor $\C \to \Z(\C)$;
this functor preserves dimensions in a case when $\C$ is spherical. Since the Drinfeld center
of a spherical fusion category is a modular tensor category (\cite{Mu}) we see that (a) is a consequence
of the following:

(b) Let $\C$ be a modular tensor category. Then $\dim(X)$ is a $d-$number for any 
$X\in \O(\C)$.

Let us prove (b). In the notation of \S \ref{mod} we have $\dim(X_i)=\frac{s_{0i}}{s_{00}}$,
see \cite[\S 3.1]{BK}. Thus for $g\in Gal(\bar \BQ/\BQ)$ we have $\frac{\dim(X_i)}{g(\dim(X_i))}=
\frac{s_{0i}g(s_{00})}{g(s_{0i})s_{00}}=
\pm \frac{s_{g(i)g^{-1}(0)}}{s_{0g(i)}}\cdot \frac{s_{0g(0)}}{s_{00}}$. Thus by \eqref{integer} 
we have $\frac{\dim(X_i)}{g(\dim(X_i))}\in \fA$ and we are done by Lemma \ref{equiv} (ii).\qed

\bibliographystyle{ams-alpha}

\begin{thebibliography}{A} 

\bibitem[AH]{AH} M.~Asaeda, U.~Haagerup, \textit{Exotic subfactors of finite depth with Jones 
indices $(5+\sqrt{13})/2$ and $(5+\sqrt{17})/2$},  Comm. Math. Phys.  \textbf{202}  (1999),  no. 1, 1--63. 

\bibitem[BK]{BK} B.~Bakalov, A.~Kirillov Jr.,
\textit{Lectures on Tensor categories and modular functors}, 
AMS, (2001).

\bibitem[CG]{CG} A.~Coste, T.~Gannon,
\textit{Remarks on Galois symmetry in rational conformal field theories},
Phys. Lett. B \textbf{323} (1994), 316-321.

\bibitem[dBG]{dBG} J.~de Boer, J.~Goeree, \textit{Markov traces and $II_1$ factors in
conformal field theory}, Comm. Math. Phys. \textbf{139} (1991), 267-304.


\bibitem[ENO]{ENO} P.~Etingof, D.~Nikshych, and V.~Ostrik,
\textit{On fusion categories}, Annals of Mathematics 
\textbf{162} (2005), 581-642.

\bibitem[EO]{EO} P.~Etingof, V.~Ostrik,
\textit{Finite tensor categories},
Moscow Math.\ Journal \textbf{4} (2004), 627-654.


\bibitem[L1]{Lu1} G.~Lusztig,
\textit{Leading coefficients of character values of Hecke algebras},  The Arcata Conference on Representations of Finite Groups (Arcata, Calif., 1986),  235--262,
Proc. Sympos. Pure Math., {\bf 47}, Part 2, Amer. Math. Soc., Providence, RI, 1987. 

\bibitem[L2]{Lu2} G.~Lusztig,
\textit{Hecke algebras with unequal parameters}, CRM Monograph Series, \textbf{18}. American 
Mathematical Society, Providence, RI, 2003. vi+136 pp.

\bibitem [M]{Mu} M.~M\"uger,
\textit{From subfactors to categories and topology. II. 
The quantum double of tensor categories and subfactors},
J.\ Pure Appl.\ Algebra  \textbf{180}  (2003),  no. 1-2, 159--219.

\bibitem[O1]{O1}  V.~Ostrik, \textit{Fusion categories of rank 2}, Math. Res. Lett. 
\textbf{10} (2003), no. 2-3, 177--183.

\bibitem[O2]{O2} V.~Ostrik, \textit{Pre-modular categories of rank 3}, Mosc. Math. J. \textbf{8} 
(2008), no. 1, 111--118.

\bibitem[W]{Wa} L.~C.~Washington, \textit{Introduction to cyclotomic fields}, Graduate Texts in Mathematics, \textbf{83}. Springer-Verlag, New York, 1997. xiv+487 pp.

\end{thebibliography}

\end{document}